\documentclass[12pt,a4paper,oneside,reqno]{amsart} 

\usepackage{cite}
\usepackage{lmodern}
\usepackage[english]{babel}
\usepackage[draft=false,kerning=true]{microtype}
\usepackage{geometry}
\usepackage{amsmath}
\usepackage{amssymb}
\usepackage{enumitem}
\usepackage{colonequals}
\usepackage{mathtools}

\setlist[enumerate]{font=\textup,itemsep=.5em,topsep=.5em}

\title[Polynomial functions on non\raise1pt\hbox{-}commutative rings]%
{Polynomial functions on subsets of non\raise1.5pt\hbox{-}commutative rings ---
a link between ringsets and null\raise1.5pt\hbox{-}ideal sets }

\author[S.~Frisch]{Sophie Frisch}
\address{Institut f\"ur Analysis und Zahlentheorie, 
Technische Universit\"at Graz,
Kopernikusgasse 24, 8010 Graz, Austria}
\email{frisch@math.tugraz.at}
\thanks{S.~Frisch is supported by the Austrian Science Fund (FWF):
P~27816-N26}

\keywords{polynomial mappings, polynomial functions,
integer-valued polynomials, null polynomials, null ideals,
matrix algebras, triangular matrices, finite rings, 
non-commutative rings, ringsets }

\subjclass[2010]{13F20, 16D25, 16P10, 16S99}

\newtheorem{thm}{Theorem}[section]

\newtheorem{fact}[thm]{Fact}
\newtheorem{cor}[thm]{Corollary}
\theoremstyle{definition}
\newtheorem{Def}[thm]{Definition}

\newtheorem{rem}[thm]{Remark}

\newcommand{\intz}{\mathbb{Z}}

\let\tensor=\otimes 
\let\isom=\simeq 
\DeclareMathOperator{\Int}{{Int}}
\DeclareMathOperator{\Intl}{{Int^l}}
\DeclareMathOperator{\Intr}{{Int^r}}

\newcommand{\polr}[3]{\/\operatorname{pol}^{r}_#1(#2,#3)}
\newcommand{\Nr}[2]{\/\operatorname{N}^r_#1(#2)}

\newcommand{\NR}{\/\operatorname{N}(R)}
\newcommand{\NlR}{\/\operatorname{N}^l(R)}
\newcommand{\NrR}{\/\operatorname{N}^r(R)}
\newcommand{\Mn}{\operatorname{M}_n}
\newcommand{\Tn}{\operatorname{T}_n}
\newcommand{\defeq}{\vcentcolon=} 

\long\def\comment#1{\relax}
\long\def\commentedout#1{\relax}

\hyphenation{Pro-po-si-tion}

\begin{document}

\begin{abstract}
Regarding polynomial functions on a subset $S$ of a non-commutative 
ring $R$, that is, functions induced by polynomials in $R[x]$
(whose variable commutes with the coefficients), we show 
connections between, on one hand, sets $S$ such that the integer-valued 
polynomials on $S$ form a ring, and, on the other hand, sets $S$ such 
that the set of polynomials in $R[x]$ that are zero on $S$ is an ideal 
of $R[x]$.
\end{abstract}

\maketitle

\section{Introduction}

In the theory of polynomial mappings on commutative rings, there
are two notable subtopics, namely, polynomial functions on finite
rings, and rings of integer-valued polynomials. Here, we are concerned
with generalizations of these two topics to polynomial mappings on
non-commutative rings, as proposed by Loper and Werner \cite{LoWe12GRiv},
and developed further by 
Werner \cite{Wer10IVQ,Wer12IVM,Wer14kill,Wer17ivasurv}, 
Peruginelli \cite{PeWe17nontriv,PeWe18Divpa},
and the present author \cite{Fri10mrs,Fri13ivalg,Fri17triang},
among others. More particularly, we will investigate connections
between null ideals of polynomials on finite non-commutative rings 
and integer-valued polynomials on non-commutative rings.

When we talk about polynomial functions on a non-commutative ring
$R$, we mean functions induced by elements of the usual polynomial 
ring $R[x]$ whose indeterminate $x$ commutes with the elements of $R$.
Non-commutative rings $R$ for which polynomial functions have been
studied include
rings of quaternions \cite{Wer10IVQ,JoPa12ivHquat,Wer17ivasurv},
and matrix algebras \cite{Wer12IVM,Fri13ivalg,Fri17triang}.

To begin, we introduce the two objects we want to relate, null ideals
and rings of integer-valued polynomials, in their original, commutative
setting:

When considering polynomial functions on a finite commutative ring $R$, 
the first thing one likes to know is the so called null ideal
$\NR$ of $R[x]$ consisting of all null-polynomials, that is, polynomials 
such that the function induced on $R$ by substitution of the variable 
is zero. The null ideal is important, because the residue 
classes of $R[x]$ mod $\NR$ correspond to the different polynomial 
mappings on $R$ and hence the index $[R[x]\colon \NR]$ indicates the 
number of different polynomial mappings on $R$.

Regarding integer-valued polynomials, they are defined, for a domain
$D$ with quotient field $K$, as those polynomials $f$ in $K[x]$ such 
that the polynomial function defined by $f$ on $K$ takes every element 
of $D$ to an element of $D$ \cite{CaCh97ivp}. 

We now generalize polynomial functions to non-commutative rings.

Let $R$ be a (possibly non-commutative) ring and 
\[f=\sum_k c_k x^k = \sum_k  x^k c_k\in R[x].\]
Then, $f$ induces two polynomial functions on $R$, namely, the 
right polynomial function $f_r\colon R\rightarrow R$ and the
left polynomial function $f_l\colon R\rightarrow R$, where
\[f_r(s)=\sum_k c_k s^k \quad\textrm{and}\quad f_l(s)=\sum_k  s^k c_k.\]


There are other generalizations of polynomial functions to polynomials
with coefficients in non-commutative rings, using polynomials whose
indeterminate does not commute with the coefficients. We are not concerned
with this kind of generalized polynomials here. Our topic are left
and right polynomial functions defined, as above, on a non-commutative
ring by polynomials in the usual polynomial ring $R[x]$ whose indeterminate
$x$ commutes with the elements of $R$.

Regarding these left and right polynomial functions on a non-commutative
ring $R$, we notice that they do not, in general, admit a substitution
homomorphism.

It may happen, for some $s$ in $R$, and 
$f,g\in R[x]$ that 
\[f_r(s)g_r(s)\neq (fg)_r(s)\quad\textrm{and also}\quad
f_l(s)g_l(s)\neq (fg)_l(s).\]

In order to generalize null-ideals to polynomials on finite
non-commutative rings, we consider the sets of right and left
null-polynomials, respectively, on $R$. We note that, in the 
absence of a substitution homomorphism, neither set is necessarily 
an ideal of $R[x]$.

\begin{Def} 
Let $R$ a ring and $f\in R[x]$. The polynomial $f$ is called 
a right null-polynomial on $R$ in case $f_r(s)=0$ for all $s\in R$, 
and a left null-polynomial on $R$ in case $f_l(s)=0$ for all $s\in R$.

We denote the sets of right and left null polynomials on $R$,
respectively, by
\[
\NrR=\{f\in R[x]\mid \forall s\in S\;f_r(s)=0\}
\quad\textrm{and}\quad
\NlR=\{f\in R[x]\mid \forall s\in S\;f_l(s)=0\}
\]
\end{Def}

It is immediately clear that

\begin{fact}[Werner \cite{Wer14kill}] For every ring $R$,
\begin{enumerate}
\item
 $\NrR$ is a left ideal of $R[x]$,
\item
 $\NlR$ is a right ideal of $R[x]$.
\end{enumerate}
\end{fact}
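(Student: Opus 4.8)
The plan is to verify the defining properties of a left (resp.\ right) ideal directly from the formulas for $f_r$ and $f_l$, exploiting the one saving grace of the situation: although there is no substitution homomorphism, the powers of a \emph{single} element $s$ still commute with one another. First I would dispose of the additive structure. Since $f\mapsto f_r(s)$ and $f\mapsto f_l(s)$ are additive for each fixed $s\in R$ (one has $(f+h)_r(s)=f_r(s)+h_r(s)$, and likewise on the left), the sets $\NrR$ and $\NlR$ are additive subgroups of $R[x]$, being intersections over $s\in R$ of the kernels of these maps. This part is routine.

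The heart of the matter is the absorption property, and here the obstacle is precisely the failure of $(gf)_r(s)=g_r(s)f_r(s)$. The key is to compute $(gf)_r(s)$ honestly. Writing $g=\sum_i a_i x^i$ and $f=\sum_j b_j x^j$, the commutation of $x$ with the coefficients gives $gf=\sum_{i,j}a_i b_j x^{i+j}$, so that
\[
(gf)_r(s)=\sum_{i,j} a_i b_j\, s^{i+j} = \sum_{i,j} a_i b_j\, s^{j} s^{i} = \sum_i a_i\Bigl(\sum_j b_j s^{j}\Bigr) s^i = \sum_i a_i\, f_r(s)\, s^i,
\]
where the second equality uses only that $s^{i+j}=s^j s^i$, i.e.\ that powers of $s$ commute. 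Thus if $f\in\NrR$, so that $f_r(s)=0$ for every $s\in R$, then $(gf)_r(s)=0$ for every $s$, whence $gf\in\NrR$. This proves $R[x]\cdot\NrR\subseteq\NrR$, establishing part (1).

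For part (2) I would run the mirror-image computation. With the same $g$ and $f$ one has $fg=\sum_{i,j} b_j a_i x^{i+j}$, whence
\[
(fg)_l(s)=\sum_{i,j} s^{i+j} b_j a_i = \sum_{i,j} s^{i} s^{j} b_j a_i = \sum_i s^i\Bigl(\sum_j s^j b_j\Bigr) a_i = \sum_i s^i\, f_l(s)\, a_i,
\]
again using only $s^{i+j}=s^i s^j$. Hence $f\in\NlR$ forces $(fg)_l(s)=0$ for all $s$, so $\NlR\cdot R[x]\subseteq\NlR$, giving part (2). The only real subtlety, as indicated, is resisting the temptation to split the evaluation multiplicatively: once one regroups $s^{i+j}$ — as $s^j s^i$ in the first case and as $s^i s^j$ in the second — the factor $f_r(s)$ (resp.\ $f_l(s)$) falls out with the leftover power of $s$ safely on the outside, which is exactly what makes the null condition propagate and also explains why the sidedness of the ideal is forced.
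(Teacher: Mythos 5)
Your proposal is correct and uses the same key computation as the paper: regrouping $s^{k+l}$ so that $f_r(s)$ (resp.\ $f_l(s)$) appears as an inner factor in $(gf)_r(s)=\sum_k b_k f_r(s) s^k$. The only differences are cosmetic — you spell out the mirror-image left computation and the additive-subgroup check, which the paper dispatches with ``similarly'' and silence, respectively.
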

\begin{proof}
Indeed, if $f,g\in R[x]$, $f=\sum_k a_k x^k$, $g=\sum_k b_k x^k$,
 and $s\in R$, then
\begin{equation}\label{fgmiddleterm}
(gf)_r(s) = \sum_{k,l}b_k a_l s^{k+l}=\sum_{k,l}b_k a_l s^l s^k =
\sum_k b_k\left(\sum_l a_l s^l\right)s^k =
 \sum_k b_k\left(f_r(s)\right)s^k 
\end{equation}
The last expression is zero whenever $f_r(s)=0$, which makes $\NrR$
a left ideal of $R[x]$. 
Similarly, by interchanging left and right, we see that $\NlR$ is 
always a right ideal of $R[x]$.
\end{proof}

Whether $\NrR$ is also a right ideal of $R[x]$, and thus an ideal,
for any finite ring $R$, is an open question, and similarly the question 
whether $\NlR$ is a left ideal and therefore an ideal. There are no 
known counterexamples.

Werner has found many sufficient conditions on $R$ for $\NrR$ to be 
a right ideal \cite{Wer14kill}, but none of them are necessary. 
If we take $R$ as the
ring of upper triangular matrices over a commutative ring $T$, we can,
by judicious choice of $T$, find examples of rings violating all of 
Werner's necessary conditions, which nevertheless satisfy that 
$\NrR$ is a right ideal and $\NlR$ is a left ideal of 
$R[x]$ \cite{Fri17triang}.
Such examples can also be found among rings of integer-valued polynomials
on quaternions \cite{Wer17ivasurv}.

Now, when we generalize integer-valued polynomials to polynomials
with coefficients in a non-commutative ring, the usual setup (as 
introduced in \cite{Fri13ivalg}) is

\begin{Def}\label{ivsetup}
Let $D$ be a domain with quotient field $K$ and $A$ a finitely 
generated, torsion-free $D$-algebra. Let $B=A\tensor_D K$. 
To avoid certain pathologies, we stipulate that $A\cap K=D$ when 
$A$ and $K$ are canonically embedded in $B$.

Then, the set of right integer-valued polynomials (with coefficients 
in $B$) on $A$ is
\[
\Intr_B(A) = \{f\in B[x]\mid \forall a\in A\; f_r(a)\in A \}
\]
and the set of left integer-valued polynomials (with coefficients 
in $B$) on $A$ is
\[
\Intl_B(A) = \{f\in B[x]\mid \forall a\in A\; f_l(a)\in A \}.
\]
\end{Def}

We remark that it is not a priori clear that $\Intr_B(A)$ and 
$\Intl_B(A)$ are rings, because, in the absence of a substitution 
homomorphism, closure under multiplication is not a given. 
Yet, there are no known counterexamples. Whether 
$\Intr_B(A)$ and $\Intl_B(A)$ are rings for any $D$-algebra $A$
as in Definition~\ref{ivsetup} remains an open question.

In some cases it is possible to describe $\Intr_B(A)$ and
$\Intl_B(A)$ via their relation to the commutative ring 
$\Int_K(A)=\Intr_B(A)\cap K[x]=\Intl_B(A)\cap K[x]$,
for instance, when $A=\Mn(D)$ is the ring of $n\times n$ matrices
over $D$. Here, $\Intl_{\Mn(K)}({\Mn(D)})$ coincides with
$\Intr_{\Mn(K)}({\Mn(D)})$ (shown to be a ring by
Werner \cite{Wer12IVM}), and is canonically isomorphic to 
$\Mn(\Int_K({\Mn(D)}))$ \cite{Fri13ivalg}.
The algebras for which 
$\Int_B(A)\isom \Int_K(A)\tensor_D A$ thus holds have been
characterized by Peruginelli and Werner \cite{PeWe18Divpa}.

For $\Tn(D)$ the ring of $n\times n$ upper triangular matrices 
with entries in $D$, $\Intr_{\Tn(K)}({\Tn(D)})$ is isomorphic to the algebra 
of matrices whose entries in position $(j,k)$ are in
$\Int_K({\mathrm{T}_{n-k+1}(D)})$, and $\Intl_{\Tn(K)}({\Tn(D)})$ is
isomorphic to the algebra of matrices whose entries in position $(j,k)$ are
from $\Int_K({\mathrm{T}_{j}(D)})$ \cite{Fri17triang}. The commutative
rings of integer-valued polynomials on upper (or lower) triangular 
matrices (with coefficients in $K$), $\Int_K({\Tn(D)})$,
are of interest in their own right
\cite{EvFaJo13ivtriang}.

Again, Werner has given different sufficient conditions on $A$ for
$\Intr_B(A)$ to be a ring, but we know that these
conditions are not necessary. Taking $A$ as the ring of 
upper triangular matrices over a judiciously chosen domain $D$ we
can find examples where $\Intr_B(A)$ and $\Intl_B(A)$ are rings, but 
all known sufficient conditions are violated \cite{Fri17triang}.
Also, such examples can be found among rings of integer-valued polynomials
over quaternion algebras \cite{Wer17ivasurv}.

\section{A connection between ringsets and null-ideal sets}

We do not know whether $\Intr_B(A)$ is always closed under multiplication;
nor do we know whether $\NrR$ is always an ideal of $R[x]$.
As a way out of this quandary, we widen the scope of our investigation.
Following Werner~\cite{Wer17ivasurv}, we consider integer-valued 
polynomials on subsets of $A$. 

Here, in addition to integer-valued polynomials on subsets, we will
also consider null-polynomials on subsets, and demonstrate a connection
between the two.

In what follows, we will often confine ourselves to right polynomial
functions, with the understanding that everything also holds,
mutatis mutandis, for left polynomial functions. In the context of
right polynomial functions, $f(c)$ means $f_r(c)$.

\begin{Def}\label{ivonsubsets}
Let $A$ be a $D$-algebra and everything as in Definition~\ref{ivsetup} 
and $S\subseteq A$.
The set of right integer-valued polynomials on $S$ is 
\[ \Int^r_B(S,A) = \{f\in B[x]\mid \forall s\in S\; f_r(s)\in A \}. \]
$S$ is called a {\em{right ringset}} if $\Intr_B(S,A)$ is closed
under multiplication, and hence a ring.

Likewise, the set of left integer-valued polynomials on $S$ is
\[ \Int^l_B(S,A) = \{f\in B[x]\mid \forall s\in S\; f_l(s)\in A \}. \]
$S$ is called a {\em{left ringset}} if $\Intl_B(S,A)$ is closed
under multiplication, and hence a ring.  
\end{Def}

It is easy to give examples, both of ringsets and of sets that are not
ringsets: For any $D$-algebra satisfying that 
$\bigcap_{d\in D\setminus\{0\}} dA = (0)$, which is, for instance,
the case if $A$ is a free $D$-module and $D$ is a Noetherian or
Krull domain, Werner \cite{Wer17ivasurv} showed that a singleton
$\{s\}\subseteq A$ is a right
ringset if and only if $s$ is in the center of $A$. 
To see ``only if,'' suppose that $s$ does not commute with some
$t\in A$. 
Let $d\in D\setminus\{0\}$ such that $ts-st\notin dA$, and let 
$f(x)=d^{-1}(x-s)$. 
Then, both $f$ and $t$ are in $\Intr_B(\{s\},A)$, but $ft$ is not.
Indeed, $(ft)(x)= d^{-1}(tx-st)$ and $(ft)(s)=d^{-1}(ts-st)\notin A$.

Note that an arbitrary union of ringsets is always a ringset, by
the fact that an intersection of rings is a ring.

\begin{Def}
Let $R$ be a ring and $S$ a subset of $R$.
We denote by $N^{r}_{R}(S)$ the set of polynomials $f\in R[x]$ 
such that for all $s\in S$, $f_r(s)=0$. 
We abbreviate $N^{r}_{R}(R)$ by $\NrR$.

Likewise,
we denote by $N^{l}_{R}(S)$ the set of polynomials $f\in R[x]$
such that for all $s\in S$, $f_l(s)=0$ and
abbreviate $N^{l}_{R}(R)$ by $\NlR$.
\end{Def}

\begin{rem}
Note that $N^{r}_{R}(S)$ is always a left ideal of $R[x]$. This is
demonstrated, just like the fact that $\NrR$ is a left ideal, by 
equation \ref{fgmiddleterm}. 

The question is: for which sets $S$ is
$N^{r}_{R}(S)$ a right ideal?

Likewise, $N^{l}_{R}(S)$ is always a right ideal of $R[x]$, and
the question is: for which sets $S$ is $N^{l}_{R}(S)$ a left ideal?
\end{rem}

\begin{Def}
We say that $S$ (as a subset of $R$) is a right null-ideal set 
if $N^{r}_{R}(S)$ is a right ideal of $R[x]$, and hence an ideal
of $R[x]$.

We say that $S$ (as a subset of $R$) is a left null-ideal set if 
$N^{l}_{R}(S)$ is a left ideal of $R[x]$, and hence an ideal
of $R[x]$.
\end{Def}

We will now give a criterion for ringsets in terms of null-ideal sets.
For this purpose, we introduce null-polynomials modulo an ideal. We
will later rephrase everything using null polynomials in the strict sense.

\begin{Def}
Let $R$ be a ring and $S$ a subset of $R$ and $I$ an ideal of $R$. 

A polynomial
$f\in R[x]$ is called a right null polynomial modulo $I$ on $S$
if $f_r(s)\in I$ for every $s\in S$. 

A polynomial $f\in R[x]$ is called a left null-polynomial modulo $I$ 
on $S$ if $f_l(s)\in I$ for every $s\in S$.

We denote by $N^{r}_{(R \mod I)}(S)$ the set of right null-polynomials 
mod $I$ on $S$
and by $N^{l}_{(R \mod I)}(S)$ the set of left
null-polynomials mod $I$ on $S$.
\end{Def}

Note that $N^{r}_{(R \mod I)}(S)$ is always a left ideal of $R[x]$ --
again, this can be seen by equation~\ref{fgmiddleterm} --
and that $N^{l}_{(R \mod I)}(S)$ is always a right ideal of $R[x]$.

\begin{Def}
We call a subset $S$ of $R$ a right null-ideal 
set modulo $I$ if $N^{r}_{(R \mod I)}(S)$ 
is a right ideal, and hence an ideal, of $R[x]$.

We call a subset $S$ of $R$ a left null-ideal set modulo $I$ 
if $N^{l}_{(R \mod I)}(S)$ 
is a left ideal, and hence an ideal, of $R[x]$.
\end{Def}

For basic facts about division with remainder in rings
of polynomials over non-commutative rings, we refer to Hungerford
\cite{Hungerford}. In particular, recall that a polynomial $f$ has
a right factor $(x-s)$ if and only if $f_r(s)=0$, and that the 
remainder of $f$ under polynomial division by $(x-s)$ from the right
is $f_r(s)$.
\goodbreak

In the special case of $D=\intz$ and $S=A$ the following has been 
shown, by a different argument, by Werner \cite[Thm.~2.4]{Wer14kill}
\goodbreak

\goodbreak
\begin{thm}\label{link}
Let $A$ be a $D$-algebra and everything as in Definition~\ref{ivsetup}.
Let $S\subseteq A$. 

Then, $S$ is a right ringset if and only if $S$ 
is a right null-ideal-set modulo $dA$ for all non-zero $d\in D$.

Similarly, $S$ is a left ringset if and only if $S$ 
is a left null-ideal-set modulo $dA$ for all non-zero $d\in D$.
\end{thm}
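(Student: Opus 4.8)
The plan is to set up a dictionary between integer-valued polynomials and null-polynomials modulo $dA$ by clearing denominators, and then to transport the closure conditions across it. Since $A$ is finitely generated and torsion-free over the central subring $D$, every $f\in B[x]$ can be written as $f=g/d$ with $g\in A[x]$ and $d\in D\setminus\{0\}$, where the scalar $d^{-1}\in K$ is central; one takes a common denominator of the coefficients. For such an $f$ and any $s$, the relation $f_r(s)=d^{-1}g_r(s)$ together with torsion-freeness gives $f_r(s)\in A$ if and only if $g_r(s)\in dA$. Hence $f=g/d\in\Intr_B(S,A)$ exactly when $g\in N^{r}_{(A \mod dA)}(S)$. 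I will argue the right-hand statement throughout; the left-hand one is identical after interchanging left and right.

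First I would dispatch the easy direction, that a right ringset is a right null-ideal set modulo every $dA$. Fix $d\neq 0$. As $N^{r}_{(A \mod dA)}(S)$ is always a left ideal, it suffices to verify closure under right multiplication by $A[x]$. Given $g\in N^{r}_{(A \mod dA)}(S)$ and $h\in A[x]$, the dictionary puts $g/d\in\Intr_B(S,A)$, while $h$ is integer-valued for free; the ringset hypothesis then yields $(g/d)h=(gh)/d\in\Intr_B(S,A)$, which, read back through the dictionary, says precisely $gh\in N^{r}_{(A \mod dA)}(S)$.

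The substantive direction is the converse. Given $f,g\in\Intr_B(S,A)$, write $f=f'/d$ and $g=g'/e$ with $f',g'\in A[x]$, fix $s\in S$, and aim for $(fg)_r(s)\in A$. Here I would invoke the one-sided product formula behind equation~\eqref{fgmiddleterm}: if $f'=\sum_k a_k x^k$ then $(f'g')_r(s)=\sum_k a_k\,g'_r(s)\,s^k$, so the right factor $g'$ enters only through the \emph{constant} $g'_r(s)$ lodged in the middle. Because $g$ is integer-valued, $g'_r(s)=e\,g_r(s)$ with $g_r(s)\in A$; setting $w=g_r(s)$ and pulling out the central scalar $e$ rewrites this as $(f'g')_r(s)=e\,(f'w)_r(s)$, where $f'w$ denotes $f'$ multiplied on the right by the constant $w$. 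Dividing by $de$ gives $(fg)_r(s)=(f'w)_r(s)/d$.

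At this point the null-ideal-set hypothesis closes the argument: $f'_r(s)=d\,f_r(s)\in dA$, so $f'\in N^{r}_{(A \mod dA)}(S)$, and since this set is by assumption a right ideal and $w\in A$, the product $f'w$ again lies in it; evaluating at $s$ gives $(f'w)_r(s)\in dA$, whence $(fg)_r(s)=(f'w)_r(s)/d\in A$. I expect the main obstacle to be exactly the reduction of the product $f'g'$ to a right multiplication by a constant: because substitution is not a homomorphism, $(f'g')_r(s)$ does not split into contributions of $f'_r(s)$ and $g'_r(s)$, and it is only the particular shape of formula~\eqref{fgmiddleterm}, with the value of the right factor appearing as an interior constant, that lets the right-ideal property be brought to bear. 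It is worth noting that only the hypothesis modulo $dA$ for the denominator $d$ of the \emph{left} factor is actually used; the denominator of $g$ enters solely through $g_r(s)\in A$.
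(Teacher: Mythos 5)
Your proof is correct and follows essentially the same route as the paper's: the key step in both is that, after clearing denominators, the value of the right factor enters $(f'g')_r(s)$ only as the interior constant $g'_r(s)=e\,g_r(s)$, so the product reduces to a right multiplication of the null-polynomial $f'$ mod $dA$ by an element of $A$, to which the null-ideal-set hypothesis applies. The paper derives this identity by dividing $g'$ by $(x-s)$ with remainder from the right rather than from the coefficient expansion \eqref{fgmiddleterm}, and phrases the easy direction contrapositively, but these differences are cosmetic.
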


\begin{proof} 
We show the statement for right polynomial mappings. We write $f(c)$ for
$f_r(c)$ in this context. (The left case is similar.)

Suppose $S$ is a right null-ideal-set modulo $dA$ for all non-zero 
$d\in D$. Let $F,G\in \Intr_B(S,A)$. To show: $(FG)\in \Intr_B(S,A)$.

We write $F$ and $G$ as $F=f/d$, $G=g/c$, such that $f,g\in A[x]$, 
$c,d\in D\setminus\{0\}$. For every $s\in S$, 
$f(s)\in dA$, and $g(s)\in cA$.  Note, in particular, that 
$f$ is a right null-polynomial modulo $dA$ on $S$.
Having represented $FG$ as $FG=(fg)/(dc)$, we need to show, 
for an arbitrary $s\in S$, that $(fg)(s)\in dcA$.

By division with remainder in $A[x]$ by $(x-s)$ from the right, 
we get 
\[ g(x)=q(x)(x-s) + g(s) \]
for some $q\in A[x]$. 
We know that $g(s)=ca=ac$ for some $a\in A$.
For this $a\in A$,
\[(fg)(x) = f(x)q(x)(x-s) + f(x)ac .\]
We set $h(x)=f(x) a$.

$f$ being a right null-polynomial modulo $dA$ on $S$ implies,
by the fact that $S$ is a right null-ideal-set modulo $dA$,
that $h=fa$ is also a right null-polynomial modulo $dA$ on $S$, 
and that, therefore, $h(s)\in dA$.
Finally, we see that $(fg)(s) = h(s)c$ is in $dcA$, as required.

For the reverse implication, suppose that $S$ is not a right
null-ideal-set modulo $dA$ for some fixed $d\in D\setminus\{0\}$. 
To show: $S$ is not a right ringset.

Let $f,g$ in $A[x]$ such that $f$ is a right null-polynomial 
modulo $dA$ on $S$, but $(fg)$ is not. Now consider 
$F=d^{-1}f\in B[x]$ and $G=g\in A[x]$.
Both $F$ and $G$ are in $\Intr_B(S,A)$, but their product
$FG=d^{-1}fg$ is not.
\end{proof}

We now rephrase the link between ring sets and null ideal sets
using null polynomials in the strict sense.
Let $S\subseteq R$ and $I$ an ideal of $R$.
We denote by $S+I$ the set of residue classes of elements of $S$ in 
$A/I$, that is, $S+I\defeq \{s+I\mid s\in S\}$. 

Let  $f\in R[x]$, and $\bar f$ image of $f$ in $(R/I)[x]$ under 
canonical projection. 
Then $f$ is a right null polynomial modulo $I$ on $S$ if and only
if $\bar f\in N_{(A/I)}(S+I)$. In other words,
\[ N^{r}_{(R \mod I)}(S) = \pi^{-1}(N_{(A/I)}(S+I)),\]
where $\pi\colon R[x]\rightarrow (R/I)[x]$ is the canonical
projection.

This shows that $S$ is a right null-ideal set modulo $I$ if and only if
$S+I=\{s+I\mid s\in S\}$, as a subset of $R/I$, is a right null ideal set
(and similarly with right replaced by left).

\begin{thm}[Version of Thm \ref{link}]\label{verslink}
Let $A$ be a $D$-algebra and everything as in Definition~\ref{ivsetup}.
Let $S\subseteq A$. Then, $S$ is a right ringset if and only if,
for all non-zero $d\in D$, $S+dA$ as a subset of $A/dA$ is a right 
null-ideal-set.
\end{thm}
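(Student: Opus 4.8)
The plan is to deduce Theorem~\ref{verslink} from Theorem~\ref{link} together with the rephrasing recorded in the discussion immediately preceding the statement. Theorem~\ref{link} already characterizes right ringsets: $S$ is a right ringset if and only if $S$ is a right null-ideal-set modulo $dA$ for every non-zero $d\in D$. Hence the only thing to supply is the translation, applied with $I=dA$, of the condition ``$S$ is a right null-ideal-set modulo $I$'' into the condition ``$S+I$ is a right null-ideal-set in $A/I$,'' which is exactly the equivalence the intervening paragraph establishes.

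First I would record the key identity $N^{r}_{(A \mod I)}(S)=\pi^{-1}(N_{(A/I)}(S+I))$, where $\pi\colon A[x]\to(A/I)[x]$ is the canonical projection (here $I=dA$ is a two-sided ideal, since $d$ lies in the center of $A$). The point is that evaluation commutes with the coefficientwise projection: because $A\to A/I$ is a ring homomorphism and $x$ commutes with the coefficients, one has $\overline{f_r(s)}=\bar f_r(s+I)$ in $A/I$ for every $f\in A[x]$ and $s\in S$. Thus $f_r(s)\in I$ for all $s\in S$ if and only if $\bar f_r(s+I)=0$ for all $s\in S$, i.e.\ $f\in\pi^{-1}(N_{(A/I)}(S+I))$, which is the asserted identity.

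Next I would argue that $N^{r}_{(A \mod I)}(S)$ is a right ideal of $A[x]$ if and only if $N_{(A/I)}(S+I)$ is a right ideal of $(A/I)[x]$; by definition this is the equivalence ``$S$ is a right null-ideal-set modulo $I$ iff $S+I$ is a right null-ideal-set in $A/I$.'' This is the order-preserving correspondence between one-sided ideals under a surjective ring homomorphism: $\pi$ is surjective with kernel $I[x]$, and the preimage $N^{r}_{(A \mod I)}(S)=\pi^{-1}(N_{(A/I)}(S+I))$ automatically contains $I[x]$, since any polynomial with coefficients in $I$ evaluates into $I$. So if $N_{(A/I)}(S+I)$ is a right ideal, its preimage under $\pi$ is a right ideal; conversely, if the preimage is a right ideal, then its image $\pi\bigl(\pi^{-1}(N_{(A/I)}(S+I))\bigr)=N_{(A/I)}(S+I)$ (equality by surjectivity) is a right ideal.

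Combining these two equivalences with $I=dA$ and quantifying over all non-zero $d\in D$ gives the claim; the left-handed version follows mutatis mutandis, the sole change being that $N^{l}$ is always a right ideal, so there one applies the correspondence to left ideals. I do not expect a genuine obstacle: the argument is entirely formal once the compatibility $\overline{f_r(s)}=\bar f_r(s+I)$ is observed. The only point meriting a moment's care is that the ideal correspondence must be invoked for one-sided (right) ideals rather than two-sided ones, which is legitimate precisely because both preimages and surjective images of right ideals are again right ideals.
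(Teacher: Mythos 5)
Your proposal is correct and follows essentially the same route as the paper: Theorem~\ref{verslink} is obtained from Theorem~\ref{link} via the identity $N^{r}_{(R \bmod I)}(S)=\pi^{-1}(N_{(A/I)}(S+I))$ and the resulting equivalence of ``right null-ideal set modulo $I$'' with ``$S+I$ is a right null-ideal set in $A/I$,'' which is exactly the translation the paper records in the paragraph preceding the theorem. Your explicit verification that evaluation commutes with coefficientwise projection and that the one-sided ideal correspondence under the surjection $\pi$ goes both ways fills in the details the paper leaves implicit.
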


\commentedout{
\begin{proof} 
Suppose $S+dA$ is a right null-ideal-set as a subset of $A/dA$,
for all non-zero $d\in D$. 
Let $F,G\in \Int_B(S,A)$. To show: $(FG)\in \Int_B(S,A)$.

We write $F$ and $G$ as $F=f/d$, $G=g/c$, such that $f,g\in A[x]$, 
$c,d\in D\setminus\{0\}$. For every $s\in S$, $g(s)\in cA$, and
$f(s)\in dA$. If $\bar f$ denotes the image of $f$ under projection of
$A[x]$ to $(A/dA)[x] $, then $\bar f\in N^{r}_{(A/dA)}(S+dA)$.

Having expressed $FG$ as $FG=(fg)/(dc)$, we need to show, for 
arbitrary $s\in S$, that $(fg)(s)\in dcA$.

By division with remainder in $A[x]$ by $(x-s)$ from the right, 
we get 
\[ g(x)=q(x)(x-s) + g(s) \]
for some $q\in A[x]$. We know that $g(s)=ca=ac$ for some $a\in A$. For
this $a\in A$,
\[(fg)(x) = f(x)q(x)(x-s) + f(x)ac .\]
We set $h(x)=f(x) a$.

$\bar f\in N^{r}_{(A/dA)}(S+dA)$ implies,
by the fact that $S+dA$ is a right null-ideal-set in $A/dA$,
that $\bar h=\bar f a$ is also in $N^{r}_{(A/dA)}(S+dA)$. 
Therefore, $h(s)\in dA$.
Finally, we see that $(fg)(s) = h(s)c$ is in $dcA$, as required.

For the reverse implication, suppose that $S+dA$ is not a right
null-ideal-set (as a subset of $A/dA$) for some 
fixed $d\in D\setminus\{0\}$.  To show: $S$ is not a right ringset.

Let $f,g$ in $A[x]$ such that $\bar f\in N^{r}_{(A/dA)}(S+dA)$
and $(\bar f \bar g)\notin N^{r}_{(A/dA)}(S+dA)$. 
Now consider $F=d^{-1}f\in B[x]$ and
$G=g\in A[x]$. Both $F$ and $G$ are in $\Int_B(S,A)$, but their product
$FG=d^{-1}fg$ is not.
\end{proof}
}

\section{A common framework for ringsets and null-ideal sets}

We state most facts of this section only for right polynomial
functions, with the understanding that everything also holds
when left is interchanged with right throughout. From now on,
$f(c)$ abbreviates $f_r(c)$, the result of substituting $c$
for $x$ in $f$ to the right of the coefficients.

Among the known sufficient conditions on $A$ and $R$, respectively,
for $\Int^{r}_B(A)$ to be a ring, and for $\NrR$ to be an ideal of $R[x]$, 
there are identical properties that have been shown, independently,
to be sufficient conditions for both questions.

We will now sketch a common generalization for $\Int^{r}_B(A)$ and
$\NrR$ that allows a unified treatment of both objects.

\begin{Def}\label{poldef}
Let $R$ be a ring, $T$ a subring of $R$, and $I$ an ideal of $T$.
We denote by $\polr RTI$ the set of polynomials in $R[x]$ that map
every element of $T$ to an element of $I$, under right substitution.
\[
\polr RTI = \{f\in R[x]\mid \forall t\in T\; f_r(t)\in I\}
\]
More generally, let $S$ be a subset of $T$. Then we define
\[
\polr RSI = \{f\in R[x]\mid \forall s\in S\; f_r(s)\in I\}
\]
\end{Def}

Note that the subring $T$ of $R$ is still subtly present in 
the definition of $\polr RSI$, since $I$ is assumed to be an 
ideal of $T$. 

In the special case where $R=T$ and $I=(0)$, we get
$\polr RR{(0)} = \NrR$, the set of right null-polynomials on $R$.

When $R=T$, $S\subseteq T$ and $I=(0)$, we 
get $\polr RS{(0)} = \Nr RS$, the set of right  null-polynomials on a
subset $S$ of $R$.

When $A$ is a $D$-algebra and $B=A\tensor_D K$, as in 
Definition~\ref{ivsetup}, and we set $R=B$ and $T=I=A$,
we have $\polr BAA = \Intr_B(A)$, the set of right integer-valued
polynomials on $A$.

Likewise, to recover integer-valued polynomials on subsets,
we set $R=B$ and $T=I=A$, and let $S$ be a subset of $A$.
Then, $\polr BSA = \Intr_B(S,A)$ is the set of right integer-valued
polynomials on $S$.

We now give an example of how integer-valued polynomials and
null polynomials can be treated together in a  more general setting.

\begin{thm}
Let $C\subseteq R[x]$ and $S\subseteq T$.
Then, for $\polr RSI$ to be closed under right multiplication 
by elements of $C$, it is sufficient that it is closed under
right multiplication by the images $c_r(s)$ with $c\in C$ and 
$s\in S$.
\end{thm}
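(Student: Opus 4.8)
The plan is to fix $f\in\polr RSI$ and $c\in C$ and show directly that $(fc)_r(s)\in I$ for each $s\in S$; since $s$ is arbitrary, this gives $fc\in\polr RSI$ and hence closure under right multiplication by $c$. The key device is right division by $(x-s)$, which lets me replace right multiplication by the \emph{polynomial} $c$ with right multiplication by the \emph{constant} $c_r(s)\in R$, which is exactly the operation the hypothesis controls.

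First I would fix $s\in S$ and divide $c$ by $(x-s)$ from the right, obtaining $c(x)=q(x)(x-s)+c_r(s)$ with $q\in R[x]$, using the recalled fact that the right remainder of $c$ upon division by $(x-s)$ equals $c_r(s)$. Multiplying this identity on the left by $f$ yields the polynomial identity
\[
fc = (fq)(x-s) + f\,c_r(s)
\]
in $R[x]$.

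Next I would evaluate both sides at $s$ under right substitution. Here the crucial observation is that right substitution $g\mapsto g_r(s)$, while not multiplicative, is additive (it is linear in the coefficients of $g$), so $(fc)_r(s)=\bigl((fq)(x-s)\bigr)_r(s)+\bigl(f\,c_r(s)\bigr)_r(s)$. The polynomial $(fq)(x-s)$ has $(x-s)$ as a right factor, so by the recalled fact its right value at $s$ vanishes, and the first summand is $0$. Thus $(fc)_r(s)=\bigl(f\,c_r(s)\bigr)_r(s)$.

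Finally, since $f\in\polr RSI$ and $c_r(s)$ is one of the constants by which $\polr RSI$ is assumed closed under right multiplication, the polynomial $f\,c_r(s)$ again lies in $\polr RSI$; in particular its right value at $s$ lies in $I$, that is, $\bigl(f\,c_r(s)\bigr)_r(s)\in I$. Combining, $(fc)_r(s)\in I$, as required. The only point that needs care --- and the reason the hypothesis is phrased in terms of the elements $c_r(s)$ rather than $c$ itself --- is that right substitution does not respect products; the division step is precisely what converts the troublesome factor $c$ into a constant that the additive evaluation can handle, after which the hypothesis applies verbatim.
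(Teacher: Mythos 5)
Your proof is correct and follows essentially the same route as the paper: right division of $c$ by $(x-s)$, left multiplication by $f$, and the observation that the term with right factor $(x-s)$ vanishes under right substitution at $s$, reducing everything to the hypothesis applied to $f\,c_r(s)$. You merely make explicit the additivity of right substitution, which the paper leaves implicit.
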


\begin{proof}
Assume $\polr RSI$ is closed under right multiplication by
elements of the form $c(s)$ with $c\in C$ and $s\in S$.
Let $f\in \polr RSI$ and $c\in C$. For an arbitrary $s\in S$,
we have to show that $(fc)(s)\in I$.

By division with remainder in $R[x]$ of $c$ by $(x-s)$ 
from the right, we get 
\[ c(x)=q(x)(x-s) + c(s) \]
for some $q\in R[x]$. Now
\[(fc)(x) = f(x)q(x)(x-s) + f(x)c(s) .\]
Let $h(x)=f(x)c(s)$. Then $(fc)(s) = h(s)$.
By assumption, $h\in \polr RSI$, and, therefore, $(fc)(s)\in I$.
\end{proof}

\begin{cor}[Werner {\cite[Prop.~6.13]{Wer17ivasurv}}]
$\Intr_B(S,A)$ is a ring if and only if $\Intr_B(S,A)$ is closed under
right multiplication by elements of $A$.
\end{cor}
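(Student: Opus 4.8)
The plan is to derive both directions from the preceding theorem, specialized to $R=B$, $T=I=A$, and $S\subseteq A$, for which $\polr BSA=\Intr_B(S,A)$.

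The forward implication is immediate. Each constant polynomial $a\in A$ satisfies $a_r(s)=a\in A$ for all $s\in S$, so $A\subseteq\Intr_B(S,A)$; if $\Intr_B(S,A)$ is a ring, it is closed under right multiplication by all of its elements, in particular by those lying in $A$.

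For the converse, I would apply the previous theorem with the self-referential choice $C=\Intr_B(S,A)$. The theorem then asserts that, in order for $\Intr_B(S,A)$ to be closed under right multiplication by every element of $C=\Intr_B(S,A)$---equivalently, closed under multiplication, and hence a ring---it suffices that $\Intr_B(S,A)$ be closed under right multiplication by the images $c_r(s)$ with $c\in\Intr_B(S,A)$ and $s\in S$.

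The one thing to check, and the crux of the argument, is that these images already lie in $A$: by the definition of $\Intr_B(S,A)$, every $c\in\Intr_B(S,A)$ satisfies $c_r(s)\in A$ for all $s\in S$. Hence the standing hypothesis---closure under right multiplication by elements of $A$---furnishes exactly the closure demanded by the theorem. I do not expect a genuine obstacle here; the only conceptual step is recognizing that the theorem converts right multiplication by an arbitrary polynomial into right multiplication by the single ring element $c_r(s)\in A$, so that the potentially hard global closure condition collapses onto the given hypothesis. Since closure under addition is automatic, we conclude that $\Intr_B(S,A)$ is a ring.
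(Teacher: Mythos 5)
Your proof is correct and is exactly the intended argument: the paper leaves the corollary's proof implicit, as the immediate specialization of the preceding theorem with $R=B$, $T=I=A$, and $C=\Intr_B(S,A)$, using the fact that the images $c_r(s)$ lie in $A$ by the very definition of $\Intr_B(S,A)$. Nothing further is needed.
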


\begin{cor}[Werner {\cite[Lemma~2.3]{Wer14kill}}]
$\Nr RS $ is an ideal if and only if it is closed under right
multiplication by elements of $R$.
\end{cor}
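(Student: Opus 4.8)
The plan is to obtain this corollary as a direct specialization of the general theorem proved above, the one asserting that closure of $\polr RSI$ under right multiplication by the images $c_r(s)$ is already sufficient for closure under right multiplication by all of $C$. Recall that $\Nr RS = \polr RS{(0)}$ and that, by the argument behind equation~\ref{fgmiddleterm}, $\Nr RS$ is always a left ideal of $R[x]$. Consequently $\Nr RS$ is a two-sided ideal if and only if it is additionally a right ideal, that is, closed under right multiplication by every element of $R[x]$. This reduces the corollary to showing that closure under right multiplication by the constants $R$ already forces closure under right multiplication by all of $R[x]$.

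The \emph{only if} direction is immediate: a right ideal is in particular closed under right multiplication by the constant polynomials, which we identify with the elements of $R$. For the \emph{if} direction I would invoke the theorem with $C=R[x]$ and $I=(0)$. The one observation needed is that, for $S\neq\emptyset$, the set of images $\{c_r(s)\mid c\in R[x],\ s\in S\}$ is exactly $R$: every such image lies in $R$, and conversely the constant polynomial $c=r$ yields $c_r(s)=r$ for each $r\in R$. Hence closure of $\Nr RS$ under right multiplication by elements of $R$ is literally the same as closure under right multiplication by all images $c_r(s)$, and the theorem then delivers closure under right multiplication by every element of $C=R[x]$. Thus $\Nr RS$ is a right ideal, and therefore an ideal.

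I expect no genuine obstacle, as the statement is essentially a re-reading of the theorem. The only point deserving a moment's care is the identification of the image set with $R$, which hinges on using constant polynomials; the degenerate case $S=\emptyset$, where $\Nr RS=R[x]$ is trivially an ideal, can be dismissed at the outset.
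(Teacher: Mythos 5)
Your proposal is correct and is exactly the intended derivation: the paper presents this corollary as an immediate specialization of the preceding theorem (take $T=R$, $I=(0)$, $C=R[x]$, and note every image $c_r(s)$ lies in $R$), combined with the standing fact that $\Nr RS$ is always a left ideal. Your extra observation that the image set is \emph{all} of $R$ is more than is needed (containment in $R$ suffices), but it is harmless.
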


By the above corollaries, the two questions, 
\begin{enumerate}
\item
whether $\Intr_B(A)$ is a ring, and
\item
whether $\Nr RR$ is an ideal of $R[x]$,
\end{enumerate}
can now be subsumed under a single question
\begin{enumerate}[resume]
\item
is $\polr RTI$ a right $T$-module (with the restricion of the 
multiplication of $R[x]$ as scalar multiplication)?
\end{enumerate}

We illustrate the principle of treating null-ideals and rings
of integer-valued polynomials in one common setting by another
one of Werner's sufficient conditions.

\begin{thm} Let $R$ be a ring, $T$ a subring of $R$ and $I$ an
ideal of $T$. 
If $T$ is generated by units as an algebra over
its center, then 
\begin{enumerate}
\item
$\polr RTI$ is a right $T$-module.
\item
More generally, for every subset $S$ of $T$ that is closed under
conjugation by units of $T$, $\polr RSI$ is a right $T$-module.
\end{enumerate}
\end{thm}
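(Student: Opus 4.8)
The plan is to reduce the assertion to two special cases---closure under right multiplication by central elements and by units of $T$---and then invoke the hypothesis that $T$ is generated by its units over its center. Note first that (1) is the special case $S=T$ of (2), since the subring $T$ is trivially closed under conjugation by its own units ($utu^{-1}\in T$ whenever $u$ is a unit and $t\in T$). So I treat only (2).

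The first step is to isolate the set of good right multipliers,
\[ M=\{t\in T\mid \polr RSI\, t\subseteq \polr RSI\}. \]
Because $\polr RSI$ is an additive subgroup of $R[x]$ (it contains $0$ and is closed under sums and negatives, as $f_r(s),g_r(s)\in I$ force $(f\pm g)_r(s)\in I$) and multiplication in $R[x]$ is associative and distributive, $M$ is a unital subring of $T$: it contains $1$ and $0$, and $(fa)b=f(ab)$ together with $f(a+b)=fa+fb$ give closure under products and sums. Hence, once I show that $M$ contains the center $Z$ of $T$ and every unit of $T$, the fact that $T$ is generated by its units as a $Z$-algebra forces $M=T$ (being a $Z$-subalgebra containing all the units), which is exactly the statement that $\polr RSI$ is a right $T$-module.

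It remains to verify the two containments by direct evaluation. Fix $f=\sum_k c_k x^k\in\polr RSI$ and $s\in S$. For a central $z\in Z$, using that $x$ commutes with $z$ and that $z$ commutes with $s$, I compute
\[ (fz)_r(s)=\sum_k c_k z s^k=\sum_k c_k s^k z=f_r(s)\,z, \]
which lies in $I$ because $f_r(s)\in I$ and $I$ is an ideal of $T$; thus $Z\subseteq M$. For a unit $u$ of $T$, set $s'=usu^{-1}$ and use $s'^{k}u=us^k u^{-1}u=us^k$ to obtain
\[ (fu)_r(s)=\sum_k c_k u s^k=\sum_k c_k s'^{k}u=f_r(s')\,u. \]
This is the point where the conjugation hypothesis on $S$ enters: $s'=usu^{-1}\in S$, so $f_r(s')\in I$, and then $f_r(s')\,u\in I$ since $u\in T$. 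Hence every unit of $T$ lies in $M$, completing the reduction.

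The conceptual heart---and the only place the closure of $S$ under conjugation is used---is the identity $(fu)_r(s)=f_r(usu^{-1})\,u$, which converts right multiplication by a unit into evaluation at a conjugate point. I expect the main obstacle to be recognizing that this single identity is precisely what dovetails the two hypotheses (generation of $T$ by units over its center, and conjugation-invariance of $S$); once it is in hand, everything else is the formal module-theoretic reduction encapsulated by $M$.
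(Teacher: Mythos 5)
Your proof is correct and follows essentially the same route as the paper: the key identity $(fu)_r(s)=f_r(usu^{-1})\,u$ for units $u$, together with closure under right multiplication by central elements, and then the observation that these multipliers generate all of $T$. Packaging the last step via the multiplier subring $M$ is just a slightly more formal version of the paper's concluding sentence that every element of $T$ is a finite sum of products of central elements and units.
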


\begin{proof}
Let $f=\sum_{k} c_k x^k\in \polr RTI$, and $u$ a unit of $T$.
Then $fu\in \polr RSI$, because, for any $t\in S$, $t$ can be
written as $t=u^{-1}\tau u$ with $\tau=utu^{-1}\in S$, and then
\[(fu)(t) = \sum_k c_k u t^k = \sum_k c_k u u^{-1}\tau^k u = f(\tau)u,\]
where $f(\tau)u\in I$, because $f(\tau)\in I$ and $I$ is an 
ideal of $T$.

Therefore, $\polr RSI$ is closed under multiplication from the
right by units of $T$. Also, $\polr RSI$ is certainly closed under 
multiplication from the right by elements in the center of $T$ 
(thanks to the fact that $S$ is a subset of $T$), and closed under 
addition and subtraction. 
Since every element of $T$ is a finite sum of products of central
elements and units of $T$, we may conclude that $\polr RTI$ is closed 
under multiplication from the right by elements of $T$.
\end{proof}

\begin{cor}[Werner {\cite[Prop.~6.13]{Wer17ivasurv}}]
If $A$ is generated by units as an algebra over its center and
$S\subseteq A$ is closed under conjugation by units of $A$,
then $\Intr_B(S,A)$ is a ring, i.e., $S$ is a right ringset.
\end{cor}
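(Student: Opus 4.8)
The plan is to deduce this corollary as a direct specialization of the preceding theorem, so that essentially all of the work has already been done. First I would instantiate that theorem with $R = B$, $T = I = A$, and the same set $S$. Under this choice, the identification recorded just after Definition~\ref{poldef} gives $\polr BSA = \Intr_B(S,A)$, so the object controlled by the theorem is exactly the set of right integer-valued polynomials on $S$. Here $I = A$ is a legitimate choice, since $A$ is an ideal of itself, and $S \subseteq T = A$ by hypothesis.

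Next I would check that the two hypotheses transfer verbatim. The theorem requires $T$ to be generated by units as an algebra over its center, which is precisely the assumption on $A = T$; and it requires the subset to be closed under conjugation by units of $T = A$, which is precisely the second assumption on $S$. Hence part~(2) of the theorem applies and yields that $\Intr_B(S,A) = \polr BSA$ is a right $A$-module, where the module action is the restriction of the multiplication of $B[x]$.

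Finally I would translate the conclusion ``right $A$-module'' into ``ring.'' Carrying the $A$-module structure coming from the ambient multiplication means precisely that $\Intr_B(S,A)$ is closed under right multiplication by elements of $A$; the remaining module axioms hold automatically, since $\Intr_B(S,A)$ is an additive subgroup of $B[x]$. By the earlier corollary of Werner (Prop.~6.13), closure of $\Intr_B(S,A)$ under right multiplication by elements of $A$ is equivalent to $\Intr_B(S,A)$ being a ring, which is exactly the assertion that $S$ is a right ringset. I expect no genuine obstacle here: the only point requiring care is the bookkeeping of the specialization $T = I = A$ and the observation that the module conclusion of the theorem coincides with the multiplicative-closure hypothesis demanded by the cited corollary.
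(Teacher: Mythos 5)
Your proposal is correct and is exactly the argument the paper intends: the corollary is the specialization $R=B$, $T=I=A$ of the preceding theorem, combined with the earlier corollary of Werner that closure under right multiplication by elements of $A$ already forces $\Intr_B(S,A)$ to be a ring. The paper leaves this deduction implicit, and your bookkeeping of the identification $\polr BSA = \Intr_B(S,A)$ fills it in correctly.
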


\begin{cor}
If $R$ is generated by units as an algebra over its center and
$S\subseteq R$ is closed under conjugation by units of $R$,
then $\Nr RS$ is an ideal of $R[x]$, i.e., $S$ is a right 
null-ideal set.
\end{cor}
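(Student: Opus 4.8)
The plan is to obtain this corollary as an immediate specialization of the preceding theorem, taking $T=R$ and $I=(0)$. Under this choice, the identification recorded after Definition~\ref{poldef} reads $\polr RS{(0)}=\Nr RS$, so the object under scrutiny is exactly the set of right null-polynomials on $S$.

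First I would check that the hypotheses translate faithfully. The assumption that $T$ is generated by units as an algebra over its center becomes the hypothesis that $R$ is so generated, and the requirement that $S\subseteq T$ be closed under conjugation by units of $T$ becomes precisely the stated closure of $S\subseteq R$ under conjugation by units of $R$. Thus part (2) of the theorem applies verbatim and delivers that $\Nr RS=\polr RS{(0)}$ is a right $R$-module, i.e., is closed under right multiplication by elements of $R$.

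It then remains only to upgrade this closure to full idealhood. For this I would invoke the earlier corollary attributed to Werner, which states that $\Nr RS$ is an ideal if and only if it is closed under right multiplication by elements of $R$. Since $\Nr RS$ is always a left ideal (by equation~\ref{fgmiddleterm}) and we have just produced the missing right-multiplication closure, the cited equivalence yields that $\Nr RS$ is a two-sided ideal, so $S$ is a right null-ideal set.

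I do not expect any genuine obstacle here: all the content resides in the theorem, whose proof disposes of central scalars automatically (because $S\subseteq T$) and of units through the identity $(fu)(t)=f(utu^{-1})u$, and then spans all of $T$ by finite sums of products of central elements and units. The only point requiring attention is that the right-module conclusion is exactly what the a~priori one-sided status of $\Nr RS$ leaves open, and that the passage from ``right $R$-module'' to ``ideal'' is supplied by the cited corollary rather than re-derived.
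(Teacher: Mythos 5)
Your proposal is correct and is exactly the derivation the paper intends: the corollary is the specialization $T=R$, $I=(0)$ of the preceding theorem, which yields closure of $\Nr RS=\polr RS{(0)}$ under right multiplication by elements of $R$, and the earlier corollary (Werner's criterion) then upgrades this to idealhood. No gaps.
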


\bibliography{bib-for-vietnam}
\bibliographystyle{siamese}

\end{document}